\documentclass[12pt]{amsart} 
\usepackage{amscd}
\usepackage{amssymb}
\usepackage{a4wide}
\usepackage{amstext}
\usepackage{amsthm}
\usepackage{xcolor}
\usepackage{cite}
\usepackage[T1,T2A]{fontenc}
\usepackage[utf8]{inputenc}
\usepackage{url}
\usepackage{amsfonts}
\usepackage{amssymb, amsthm}
\usepackage{amsmath}
\usepackage{mathtools}
\usepackage{needspace}
\usepackage[pdftex]{graphicx}
\usepackage{hyperref}
\usepackage{datetime}
\usepackage{epigraph}
\usepackage{verbatim}
\usepackage{mathtools}
\usepackage{xcolor}
\linespread{1.2}
\numberwithin{equation}{section}

\newcommand{\T}{\mathbb{T}}
\newcommand{\N}{\mathbb{N}}

\newcommand{\D}{\mathbb{D}}

\newcommand{\summ}{\sum\limits}

\newcommand{\eps}{\varepsilon}

\renewcommand{\phi}{\varphi}

\newtheorem{Thm}{Theorem}[section]
\newtheorem{theorem}[Thm]{Theorem}

\newtheorem{lemma}[Thm]{Lemma}

\newtheorem{conjecture}[Thm]{Conjecture}

\textheight=20.8truecm

\begin{document}
\sloppy
\title[A contractive Hardy--Littlewood inequality]
{A contractive Hardy--Littlewood inequality}
\author{Aleksei Kulikov}
\address{Department of Mathematical Sciences, Norwegian University of Science and Technology, NO-7491 Trondheim, Norway
\newline {\tt lyosha.kulikov@mail.ru}
}

\begin{abstract} { We prove a contractive Hardy--Littlewood type inequality for functions from $H^p(\T)$, $0 < p \le 2$ which is sharp in the first two Taylor coefficients and asymptotically at infinity.
}
\end{abstract}

\maketitle

\section{Introduction}

The classical Hardy--Littlewood inequality \cite{MR1512359} says that for $f(z) = a_0 + a_1z + \ldots \in H^p(\T)$, $0 < p \le 2$, we have

\begin{equation}\label{hardy}
\summ_{n = 0}^\infty \frac{|a_n|^2}{(n+1)^{2/p-1}} \le C_p ||f||_p^2.
\end{equation}

In \cite{MR3858278} the following more precise version of this inequality was conjectured.

\begin{conjecture}
For the function $f(z) = a_0 + a_1z + a_2z^2 + \ldots \in H^p(\T), 0 < p \le 2$ we have
\begin{equation}\label{conj}
\summ_{n = 0}^\infty \frac{|a_n|^2}{c_{2/p}(n)} \le ||f||_p^2,
\end{equation}
where $c_\alpha(n) = \binom{n+\alpha - 1}{n}$.
\end{conjecture}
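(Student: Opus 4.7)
The plan is to convert the sum on the left into a weighted Bergman integral. The Beta-function identity $\frac{1}{c_{2/p}(n)} = \bigl(\tfrac{2}{p}-1\bigr)\int_0^1 t^n (1-t)^{2/p-2}\, dt$ (for $p<2$), combined with the radial Parseval relation $\sum_n |a_n|^2 r^{2n} = \int_\T |f(r\zeta)|^2\, d\sigma(\zeta)$, yields
\[
\sum_{n=0}^\infty \frac{|a_n|^2}{c_{2/p}(n)} = \frac{2/p - 1}{\pi}\int_\D |f(z)|^2 (1-|z|^2)^{2/p-2}\, dA(z).
\]
So the Conjecture asserts that $H^p \hookrightarrow A^2_{2/p-2}$ is a contraction; the case $p=2$ is Parseval, with the probability measure $\bigl(\tfrac{2}{p}-1\bigr)(1-|z|^2)^{2/p-2}\,dm(z)$ degenerating to normalized arc length on $\T$ as $p\uparrow 2$.

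Next I would reduce to $f$ outer via the Blaschke--outer factorization $f=BO$: since $|B|\le 1$ on $\D$, the Bergman integral of $|f|^2$ is bounded by that of $|O|^2$, while $\|f\|_{H^p} = \|O\|_{H^p}$. For outer $f$, the outer power $g := f^{p/2}\in H^2$ satisfies $|g(z)|^2 = |f(z)|^p$ pointwise in $\D$ and $\|g\|_{H^2}^2 = \|f\|_{H^p}^p$. Writing $q := 2/p \ge 1$, the Conjecture becomes the nonlinear inequality
\[
(q-1)\int_\D |g(z)|^{2q}(1-|z|^2)^{q-2}\, dm(z) \le \|g\|_{H^2}^{2q}, \qquad g\in H^2.
\]

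When $q$ is a positive integer this follows from a single Cauchy--Schwarz. Writing $g=\sum_n b_n z^n$, the ordinary product $g^q$ has Taylor coefficients $[z^n]g^q = \sum_{n_1+\cdots+n_q=n} b_{n_1}\cdots b_{n_q}$, a sum of exactly $c_q(n) = \binom{n+q-1}{q-1}$ terms. Applying $\bigl|\sum_{k=1}^N x_k\bigr|^2 \le N\sum_{k=1}^N |x_k|^2$ with $N=c_q(n)$ and summing over $n$ telescopes to
\[
\sum_n \frac{|[z^n]g^q|^2}{c_q(n)} \le \sum_{\vec n\in\Z_{\ge 0}^q} |b_{n_1}|^2\cdots |b_{n_q}|^2 = \|g\|_{H^2}^{2q}.
\]
Since $|g^q(z)| = |g(z)|^q$ throughout $\D$ for outer $g$, the Bergman identity of the first step then settles the Conjecture at every $p\in\{2,1,2/3,1/2,\dots\}$.

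The chief obstacle is the extension to non-integer $q$. For such $q$, the outer power $g^q = \exp(q\log g)$ has Taylor coefficients with no finite convolution formula, and the Cauchy--Schwarz step collapses. My plan would be to exploit analyticity of both sides in the parameter $q$: treat $g\mapsto g^q$ as an analytic family of nonlinear maps from $H^2$ into $A^2_{q-2}$ and interpolate through the integer points, or factor $g^q = g^{\lfloor q\rfloor}\cdot g^{\,q-\lfloor q\rfloor}$ and combine the integer case with a sharp endpoint bound for the fractional factor derived from an extremal-function analysis. Preserving the sharp constant $1$ is the delicate point, as standard interpolation theorems for nonlinear maps do not apply and generic real-interpolation introduces an extraneous factor that destroys contractivity; a bespoke argument tailored to the Hardy--Bergman scale therefore appears to be needed, and this is where the main technical effort should lie.
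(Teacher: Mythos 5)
This statement is stated in the paper as a \emph{conjecture}, and the paper offers no proof of it: it explicitly records that the inequality is known only for $p = 2/k$, $k \in \N$ (Burbea), with $p=1$ being Carleman's inequality, and then proves weaker substitutes (Theorems 1.2--1.4). Your proposal, by its own admission, does not close this gap either. The first three steps are correct and are essentially the classical route: the Beta-function identity turning $\sum_n |a_n|^2/c_{2/p}(n)$ into the weighted Bergman integral $\frac{2/p-1}{\pi}\int_\D |f|^2(1-|z|^2)^{2/p-2}\,dA$, the reduction to zero-free $f$ via $|B|\le 1$ (legitimate precisely because the identity converts the coefficient sum into an integral against a positive weight), and the Cauchy--Schwarz count of the $c_q(n)$ compositions of $n$ for integer $q=2/p$. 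That recovers Burbea's theorem for $p = 2/k$ and nothing more.

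The final paragraph is where the proof would have to be, and it contains no argument: ``interpolate through the integer points'' for the nonlinear family $g \mapsto g^q$ and ``a bespoke argument tailored to the Hardy--Bergman scale'' are named as hopes, and you yourself note that standard interpolation destroys the constant $1$, which is the entire content of the conjecture (the inequality with \emph{some} constant is just Hardy--Littlewood \eqref{hardy}). The factorization $g^q = g^{\lfloor q\rfloor} g^{q - \lfloor q\rfloor}$ fares no better, since the fractional factor reintroduces exactly the non-integer-power problem one started with. So there is a genuine gap: the statement remains a conjecture, your proposal proves only the already-known cases, and no mechanism is supplied for the non-integer exponents. If you want a provable statement in this direction, the paper's Theorem \ref{my hardy} (sharpness in $a_0$, $a_1$ plus a Hardy--Littlewood tail with a small constant $\eps_p$) is the kind of partial result that is actually within reach.
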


Despite vast numerical evidence this conjecture is currently proved only for $p = \frac{2}{k}, k\in \N$ by Burbea \cite{MR882113}, the case $p = 1$ being the famous Carleman inequality (see e.g.\cite{MR2263964} for a simple self-contained proof).

In  \cite{MR3304613} inequality \eqref{conj} was proved for the first two coefficients. Namely for the function $f\in H^p(\T), 0 < p \le 2$ we have $|f(0)|^2 + \frac{p}{2}|f'(0)|^2 \le ||f||_p^2$. In \cite{MR3870953}, by means of Wiessler's inequality\cite{MR578933}, the authors proved the following strengthening of this result.

\begin{theorem}
For the function $f(z) = a_0 + a_1z + a_2z^2 + \ldots \in H^p(\T), 0 < p \le 2$ we have
\begin{equation}
\summ_{n = 0}^\infty \frac{|a_n|^2}{\Phi_{2/p}(n)} \le ||f||_p^2,
\end{equation}
where $\Phi_\alpha(n) = c_{[\alpha]}(n) \left(\frac{\alpha}{[\alpha]}\right)^n$.
\end{theorem}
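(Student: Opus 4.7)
The strategy is to combine Burbea's theorem, which settles \eqref{conj} in the integer case $p = 2/k$, $k \in \N$, with Weissler's contractive inequality. Recall that Weissler's theorem asserts: for $0 < p \le q < \infty$ the dilation operator $T_r f(z) := f(rz)$ is a contraction from $H^p(\T)$ to $H^q(\T)$ precisely when $r \le \sqrt{p/q}$.

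Set $\alpha = 2/p$, $k = [\alpha]$, and $q = 2/k$, so that $p \le q \le 2$ and $k \in \N$. First I would apply Weissler's inequality with $r = \sqrt{p/q} = \sqrt{k/\alpha}$ (note that $r \le 1$ since $k \le \alpha$), obtaining
\begin{equation*}
\|f(r\,\cdot)\|_{H^q(\T)} \le \|f\|_{H^p(\T)}.
\end{equation*}
Next I would apply Burbea's theorem (the integer case of \eqref{conj}) to the dilated function $g(z) = f(rz) = \summ_{n \ge 0} a_n r^n z^n \in H^q(\T)$, which gives
\begin{equation*}
\summ_{n=0}^\infty \frac{|a_n|^2 r^{2n}}{c_k(n)} \le \|g\|_{H^q}^2 \le \|f\|_{H^p}^2.
\end{equation*}
By the choice of $r$, $r^{2n} = (k/\alpha)^n$, and so $r^{2n}/c_k(n) = 1/\Phi_\alpha(n)$ directly from the definition of $\Phi_\alpha$; the claimed bound follows at once.

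I do not foresee any real obstacle in this strategy: once one pairs the exponent $p$ with the nearest integer-index exponent $q = 2/[2/p]$ still admissible for Burbea's theorem, the algebra works out automatically to produce precisely the weights $\Phi_\alpha(n) = c_{[\alpha]}(n)(\alpha/[\alpha])^n$. Conceptually, $\Phi_\alpha(n)$ splits into a Burbea factor $c_k(n)$ and a Weissler dilation factor $(\alpha/k)^n$ coming from $r^2 = k/\alpha$; when $\alpha$ is itself an integer the dilation is trivial ($r = 1$) and one recovers Burbea's theorem verbatim. The only real ingredient one must trust from the outside is the sharp endpoint $r = \sqrt{p/q}$ in Weissler's inequality~\cite{MR578933}, as any smaller admissible radius would yield a weaker (non-sharp) weight.
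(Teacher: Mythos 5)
Your argument is correct and is precisely the proof the paper attributes to \cite{MR3870953}: this theorem is quoted there as a known result established ``by means of Wiessler's inequality,'' i.e.\ by dilating with $r=\sqrt{p/q}$, $q=2/[2/p]$, and then invoking Burbea's integer-exponent case, exactly as you do. The algebra $r^{2n}/c_k(n)=1/\Phi_{2/p}(n)$ checks out, so there is nothing to add.
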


Note that $\Phi_\alpha(0) = c_\alpha(0) = 1, \Phi_\alpha(1) = c_\alpha(1) = \alpha$ but for $\alpha \notin \N$ these coefficients grow exponentially when $n$ goes to infinity.

In this paper we prove the following theorem which gives us an inequality that is also sharp in the first two terms but for $n\ge 2$ the weight decays as in the Hardy--Littlewood inequality \eqref{hardy}.

\begin{theorem}\label{my hardy}
For each $0 < p \le 2$ there exists $\eps_p > 0$ such that for all $f\in H^p(\T)$,  ${f(z) = a_0 + a_1z + a_2z^2 + \ldots}$ we have
\begin{equation}
|a_0|^2 + \frac{p}{2}|a_1|^2 + \eps_p \summ_{n = 2}^\infty \frac{|a_n|^2}{(n+1)^{2/p - 1}} \le ||f||_p^2.
\end{equation}
\end{theorem}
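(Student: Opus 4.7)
The plan is to combine the sharp two-coefficient inequality of \cite{MR3304613}, which asserts that
\[
|a_0|^2 + \tfrac{p}{2}|a_1|^2 \le \|f\|_p^2,
\]
with the classical Hardy--Littlewood inequality \eqref{hardy} applied to the \emph{tail} of $f$. Set $h(z) := f(z) - a_0 - a_1 z$. Then $h = z^2 \tilde f$, where $\tilde f(z) = \summ_{n \ge 0} a_{n+2} z^n$ belongs to $H^p(\T)$ with $\|\tilde f\|_p = \|h\|_p$. Applying \eqref{hardy} to $\tilde f$, reindexing, and using $(n-1)^{2/p-1} \le (n+1)^{2/p-1}$ for $n \ge 2$ yields
\[
\summ_{n \ge 2} \frac{|a_n|^2}{(n+1)^{2/p-1}} \le C_p \|h\|_p^2.
\]

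With this tail bound, Theorem \ref{my hardy} is reduced to the following \emph{gap lemma}: there exists $\eta_p > 0$ such that
\[
\eta_p \|h\|_p^2 \le \|f\|_p^2 - |a_0|^2 - \tfrac{p}{2}|a_1|^2
\]
for every $f \in H^p(\T)$. Combining it with the previous display then yields the theorem with $\eps_p = \eta_p/C_p$. Note that a direct convex combination of \cite{MR3304613} with \eqref{hardy} does not work, because the Hardy--Littlewood constant $C_p > 1$ for $p < 2$ gives too small a coefficient of $|a_0|^2$ to be compensated by the two-term inequality; this is why a stability estimate of ``gap'' type seems necessary.

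To establish the gap lemma I would argue by contradiction and compactness. If the lemma fails, there exists a sequence $(f_k)$, normalized so that $\|h_k\|_p = 1$, with $\|f_k\|_p^2 - |a_0^{(k)}|^2 - (p/2)|a_1^{(k)}|^2 \to 0$. One first shows $\|f_k\|_p$ stays bounded, then extracts a coefficient-wise limit $f \in H^p(\T)$; by lower semicontinuity of the $H^p$ norm and the inequality of \cite{MR3304613}, equality $|a_0|^2 + \tfrac{p}{2}|a_1|^2 = \|f\|_p^2$ must hold in the limit. A case analysis of the equality cases should then force $f$ to be a constant, so $h_k \to 0$ coefficient-wise while $\|h_k\|_p = 1$, meaning the mass of $(h_k)$ concentrates at high Fourier frequencies.

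The main obstacle is the gap lemma, and two sub-issues in particular: characterizing the equality cases of the inequality of \cite{MR3304613} for all $0 < p < 2$ (one expects only constants, but this must be proved, e.g.\ by analyzing the equality cases in Weissler's hypercontractive estimate or the original argument), and ruling out the concentration of $\|h_k\|_p$ mass at arbitrarily high frequencies in a minimizing sequence. The latter is delicate because $H^p$ norms are invariant under the shift $\phi \mapsto z^m \phi$. One plausible workaround is to bypass compactness in favor of a direct quantitative argument, exploiting the monotonicity of $r \mapsto \|f(r\cdot)\|_p^2$ together with a differential identity, or combining the uniform convexity of $L^p$ for $1 < p \le 2$ with ad-hoc estimates in the range $0 < p \le 1$ where convexity techniques fail.
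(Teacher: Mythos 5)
Your reduction is exactly the one the paper uses: apply the classical Hardy--Littlewood inequality \eqref{hardy} to $h = f - a_0 - a_1 z$ to control the tail sum by $C_p\|h\|_p^2$, and then invoke a ``gap lemma'' $\eta_p\|h\|_p^2 \le \|f\|_p^2 - |a_0|^2 - \frac{p}{2}|a_1|^2$. That gap lemma is precisely Theorem \ref{my truncation}, and it is the entire technical content of the paper (Sections 2--4); so the part of your argument that is complete is the easy part, and the part you leave open is the whole theorem.

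The compactness strategy you sketch for the gap lemma does not close, and not only for the reasons you flag. The fundamental obstruction is that this is a stability (second-order) estimate: along the near-extremal family $f = a_0 + a_1 z + \dots$ with $|a_0| \to 1$, \emph{both} the deficit $\|f\|_p^2 - |a_0|^2 - \frac{p}{2}|a_1|^2$ \emph{and} $\|h\|_p^2$ tend to zero, so a contradiction-and-limit argument cannot produce a uniform ratio $\eta_p$ without a quantitative expansion near the equality case $f \equiv \mathrm{const}$. (This is exactly where the paper works hardest: writing $U = f^{p/2}$ for zero-free $f$, setting $|a_0|^2 = 1 - \beta^2$, and Taylor-expanding to find that the deficit is $\frac{2-p}{4}\beta^4 + O(\beta^6)$ while $\|h\|_p^2 = O(\beta^4)$ --- the two quantities are genuinely comparable only at fourth order in $\beta$.) On top of that, your limiting step is shaky on its own terms: for $p<1$ the space is not locally convex, coefficient-wise limits can lose all the mass of $h_k$ (norms are invariant under $\phi \mapsto z^m\phi$, as you note), and lower semicontinuity of $\|\cdot\|_p$ under coefficient-wise convergence must be justified. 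The paper avoids all of this by a direct argument: a weak version of the gap lemma for the zeroth coefficient alone (via a multiplicative decomposition $f = f_0\cdots f_n$ into $H^2$ and $H^q$ factors), a case analysis on $(|a_0|,|a_1|)$ for zero-free $f$ with the delicate regime $|a_0|$ near $1$ handled by the expansion above, and finally a separate accounting for Blaschke factors showing each zero $w_k$ costs at least $c_p(1-|w_k|)$ in the deficit while contributing at most $O\bigl(\sqrt{\sum(1-|w_n|)}\bigr)$ to $\|h\|_p$. If you want to complete your proposal, you should prove the gap lemma by some such explicit perturbative analysis rather than by compactness.
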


Note that the constant $\frac{p}{2}$ is optimal as can be seen from the function $f(z) = 1 + \eps z, \eps \to 0$.

The proof of this inequality is based on the following theorem which may be of independent interest.

\begin{theorem}\label{my truncation}
For $0 < p \le 2$ there exists $C_p' < \infty$ such that for all $f\in H^p(\T)$ we have
\begin{equation}
||f(z) - f(0) - f'(0)z||_p^2 \le C_p'(||f||_p^2 - |a_0|^2 - \frac{p}{2}|a_1|^2).
\end{equation}
\end{theorem}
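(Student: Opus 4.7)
For $p=2$ the theorem is trivial with $C_p' = 1$ since $D(f) := \|f\|_p^2 - |a_0|^2 - \tfrac{p}{2}|a_1|^2 = \|g\|_2^2$, so henceforth take $0 < p < 2$. The plan is a contradiction-and-compactness argument. Suppose the theorem fails; by homogeneity and rotation, one obtains a sequence $\{f_n\} \subset H^p(\T)$ with $\|f_n\|_p = 1$, $a_0^{(n)}, a_1^{(n)} \ge 0$, and $\|g_n\|_p^2/D(f_n) \to \infty$, where $g_n := f_n - a_0^{(n)} - a_1^{(n)}z$. The two-term inequality of \cite{MR3304613} bounds $a_0^{(n)}, a_1^{(n)}$, hence $\|g_n\|_p$, so $D(f_n) \to 0$. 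The family $\{f_n\}$ is normal in $\D$; pass to a subsequence converging uniformly on compacta to some $f^*$ with $a_k^{(n)} \to a_k^*$. Taking the limit in $\|f_n\|_p^2 = (a_0^{(n)})^2 + \tfrac{p}{2}(a_1^{(n)})^2 + D(f_n)$, combined with Fatou and the two-term inequality for $f^*$, forces $\|f^*\|_p^2 = (a_0^*)^2 + \tfrac{p}{2}(a_1^*)^2 = 1$. Theorem 1.2 then upgrades this to $a_n^* = 0$ for $n \ge 2$, so $f^* = a_0^* + a_1^*z$. The explicit series $\|1 + \eta z\|_p^p = \sum_{k \ge 0}\binom{p/2}{k}^2 \eta^{2k}$ (for $|\eta| < 1$), combined with Bernoulli's inequality applied to the power $2/p > 1$, gives $\|1 + \eta z\|_p^2 > 1 + \tfrac{p}{2}\eta^2$ for $\eta \ne 0$; the scaling identity $\|1+\eta z\|_p = |\eta|\cdot\|1 + \eta^{-1}z\|_p$ extends this to $|\eta| \ge 1$. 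This rules out $a_1^* \ne 0$, so $f^* \equiv 1$; combined with $\|f_n\|_p \to 1$, the Br\'ezis--Lieb lemma applied to boundary values gives $f_n \to 1$ in $L^p(\T)$.

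The crucial final step is a quantitative second-order expansion. Setting $\phi_n := a_1^{(n)}z + g_n \to 0$ in $L^p(\T)$, the pointwise identity
\[
|a_0^{(n)} + \phi|^p = (a_0^{(n)})^p + p(a_0^{(n)})^{p-1}\re\phi + \tfrac{p(p-1)}{2}(a_0^{(n)})^{p-2}(\re\phi)^2 + \tfrac{p}{2}(a_0^{(n)})^{p-2}(\im\phi)^2 + R_n(\phi),
\]
valid with $|R_n(\phi)| \lesssim |\phi|^3$ on $\{|\phi| \le a_0^{(n)}/2\}$, integrates (using $\int\phi\,d\mu = 0$ and $\int(\re\phi)^2\,d\mu = \int(\im\phi)^2\,d\mu = \tfrac{1}{2}\|\phi\|_2^2$), and after raising to the $2/p$-th power yields
\[
D(f_n) = \tfrac{p}{2}\bigl(\|\phi_n\|_2^2 - |a_1^{(n)}|^2\bigr) + E_n = \tfrac{p}{2}\|g_n\|_2^2 + E_n \ge \tfrac{p}{2}\|g_n\|_p^2 + E_n.
\]
If $|E_n| = o(\|g_n\|_p^2)$, this contradicts $\|g_n\|_p^2/D(f_n) \to \infty$, closing the argument.

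The principal technical obstacle is controlling $E_n$ uniformly: $L^p$-smallness of $\phi_n$ does not give pointwise smallness, and the cubic bound on $R_n$ is valid only on $\{|\phi_n| \le a_0^{(n)}/2\}$. The natural remedy is to split $\T = \{|\phi_n| \le \delta_n\} \sqcup \{|\phi_n| > \delta_n\}$ for a carefully chosen $\delta_n \to 0$. On the good set, $|R_n| \lesssim \delta_n|\phi_n|^2$, whose integral $\lesssim \delta_n\|\phi_n\|_2^2$ is absorbed by the main term once $\delta_n$ is small enough. On the bad set, whose measure is $\lesssim \|\phi_n\|_p^p/\delta_n^p$ by Markov, one uses the crude pointwise bound $|R_n(\phi)| \lesssim (a_0^{(n)})^p + |\phi|^p + |\phi|^2$ together with $L^p$-control of $\phi_n$ and the Hardy--Littlewood inequality applied to $\phi_n$ to push the contribution down to $o(\|g_n\|_p^2)$; balancing $\delta_n$ to close both estimates simultaneously is the crux.
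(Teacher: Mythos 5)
Your compactness reduction is a genuinely different (and attractive) way to dispose of the easy regimes: it correctly shows that a putative counterexample sequence must converge to the constant function $1$, which is precisely the paper's hard ``fourth case'' ($|a_0|$ close to $1$). But the quantitative step you then need is exactly where the real content of the theorem lives, and your central claim there --- that $D(f_n) = \frac{p}{2}\|g_n\|_2^2 + E_n$ with $|E_n| = o(\|g_n\|_p^2)$ --- is false as stated. The error $E_n$ contains (i) the fourth-order contribution of the linear part $a_1^{(n)}z$ of $\phi_n$ (the cubic one vanishes by symmetry), which is of size $|a_1^{(n)}|^4$, and (ii) cross terms such as $\int (\re \phi_n)^3$, which produce contributions of size $|a_1^{(n)}|^2|a_2^{(n)}|$ with uncontrolled sign. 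Neither is $o(\|g_n\|_p^2)$: for $f = 1 + \eta z + \eta^{N}z^2$ (normalized, $N$ large) one has $\|g\|_p^2 \sim \eta^{2N}$ while $|a_1|^4 \sim \eta^4$. The theorem survives because the pure-$a_1$ quartic deficit has a \emph{favourable sign} (this is the paper's computation $1 + \frac{p-2}{4}\beta^4 + c_p\beta^4 + O(\beta^6) \le 1$, valid only because $c_p < \frac{2-p}{4}$), so what you actually need is a one-sided bound $E_n \ge -o(\|g_n\|_p^2)$ \emph{after} extracting and sign-checking the quartic term and absorbing the cross terms by Cauchy--Schwarz into both the quartic term and the main term $\frac{p}{2}\|g_n\|_2^2$. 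Your outline does none of this, and your additive expansion of $|a_0+\phi|^p$ is structurally ill-suited to it: the paper instead works with $U=f^{p/2}$, where Parseval is exact and there are no cross terms, and decouples the $a_1$ contribution multiplicatively via $V = U\bigl(1-\frac{p}{2a_0}a_1z\bigr)$ before invoking the weak Lemma~\ref{weaker}. Relatedly, your expansion presumes $\phi_n \in L^2(\T)$, which an $H^p$ function need not satisfy; the ``good set / bad set'' splitting you defer to is not a routine technicality but (together with the sign of the quartic term) the crux, and you explicitly leave it open.

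Two smaller points. First, Br\'ezis--Lieb requires almost-everywhere convergence of the boundary values, which does not follow from locally uniform convergence in $\D$; you should instead invoke the classical $H^p$ fact that locally uniform convergence plus convergence of norms implies norm convergence (Duren, Theorem 2.21 and its standard consequences). Second, a genuine advantage of your route, were the expansion to be controlled, is that it never uses zero-freeness of $f$, whereas the paper must treat zeros separately through a Blaschke-product argument; but as it stands the proof is incomplete at its decisive step.
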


Since this theorem is obviously true for $p =2$ we will prove it only for $0 < p < 2$. Moreover, the constants $C_p'$ will be uniformly bounded except possibly for $0 < p < \eps$  and $2 - \eps < p < 2$. It is easy to see that in the former case nonuniformity is unavoidable but we do not know what happens when $p$ is close to $2$.
\section{Weak form of Theorem \ref{my truncation}}

In this section we will prove the following lemma.
\begin{lemma}\label{weaker}
For every $0 < p \le 2$ there exists a constant $\gamma_p$ such that for all $f\in H^p(\T)$ we have
\begin{equation}
||f - f(0)||_p \le \gamma_p \sqrt{||f||_p^2 - |f(0)|^2}.
\end{equation}
\end{lemma}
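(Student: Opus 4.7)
The plan is to normalize and then split the analysis into three regimes according to the size of $t := \|f - f(0)\|_p$. When $f(0) = 0$ the bound holds with $\gamma_p = 1$. When $f(0) \ne 0$, using the positive homogeneity of both sides in $f$ and a rotation, we may assume $f(0) = 1$; writing $f = 1 + g$ with $g \in H^p$ and $g(0) = 0$, the inequality to prove reduces to $t^2 \le \gamma_p^2(\|1+g\|_p^2 - 1)$, where $t = \|g\|_p$.

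In the regime $t \ge T_p$ (a large threshold depending on $p$ alone), the quasi-triangle inequality for $\|\cdot\|_p^p$ yields $\|1+g\|_p^p \ge t^p - 1$, so $\|1+g\|_p^2 - 1 \ge t^2/2$, and the bound holds with $\gamma_p = \sqrt{2}$. In the regime $t \le \eta_p$, I would expand $|1+g|^p$ pointwise about $g=0$,
\[
|1+g|^p = 1 + p\re g + \tfrac{p}{2}|g|^2 + \tfrac{p(p-2)}{2}(\re g)^2 + R(g),
\]
integrate over $\T$, and invoke the mean-value identities $\int_\T \re g = 0$ and $\int_\T (\re g)^2 = \tfrac12\|g\|_2^2$ (the latter because $g^2$ vanishes at the origin, hence has zero mean), yielding $\|1+g\|_p^p = 1 + \tfrac{p^2}{4}\|g\|_2^2 + \int R$. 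The remainder should be $o(t^2)$ as $t \to 0$, which I would establish by splitting $\T$ into $\{|g|\le\delta\}$ (using the pointwise Taylor bound $|R| \lesssim |g|^3$) and $\{|g|>\delta\}$ (whose measure is at most $t^p/\delta^p$ by Chebyshev), optimizing $\delta = \delta(t)$; since $\|g\|_2 \ge \|g\|_p = t$, this gives $\|1+g\|_p^2 - 1 \ge (p/2)t^2(1+o(1))$ for small $t$.

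The intermediate regime $t \in [\eta_p, T_p]$ is the main obstacle and I would treat it by a compactness argument. If the bound failed uniformly here, there would exist $g_n \in H^p$ with $g_n(0)=0$, $\|g_n\|_p\in[\eta_p, T_p]$, and $\|1+g_n\|_p \to 1$. Using the coefficient bounds supplied by Hardy--Littlewood \eqref{hardy}, a subsequence converges uniformly on compact subsets of $\D$ to some $g_\infty$, which Fatou's lemma plus the strict form of Jensen's inequality in $H^p$ ($\|h\|_p = |h(0)|$ forces $h$ to be constant) identifies as $g_\infty \equiv 0$. The contradiction $\|g_n\|_p \to 0$ then follows for $1 < p < \infty$ from the Radon--Riesz property of $L^p(\T)$ applied to $1+g_n \to 1$ in norm and weakly; the endpoint cases $p=1$ and $0 < p < 1$ demand a more delicate Brezis--Lieb-type argument, which is where the technical difficulty really lies.
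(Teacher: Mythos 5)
Your strategy has two genuine gaps, both concentrated exactly where the lemma is hardest, namely $0<p\le 1$ (which is also the range the paper most needs). First, in the small-$t$ regime the identities $\int_\T \re g=0$ and $\int_\T(\re g)^2=\tfrac12\|g\|_2^2$ are not available for $p<1$: the boundary values of $g\in H^p$ lie only in $L^p\not\subset L^1$, the mean value property fails, and $\|g\|_2$ may be infinite. Your proposed fix --- splitting into $\{|g|\le\delta\}$ and $\{|g|>\delta\}$ --- destroys precisely the cancellation the expansion relies on: after restricting to $\{|g|\le\delta\}$, the linear term becomes $-p\int_{\{|g|>\delta\}}\re g$, which for $p\ge 1$ is only bounded by $\int_{\{|g|>\delta\}}|g|\le t^{p}\delta^{1-p}$ (and for $p<1$ is not bounded in terms of $t$ at all); making $t^{p}\delta^{1-p}=o(t^2)$ forces $\delta\to\infty$ as $t\to 0$, while the Taylor remainder bound $|R|\lesssim|g|^3$ forces $\delta$ to stay small, so the two requirements are incompatible. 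Moreover, for $p<1$ there is no pointwise inequality $|1+x|^p\ge 1+p\re x+c|x|^2$ (test $x=-s$ with $s>0$ small: the quadratic coefficient $\tfrac{p(p-1)}{2}$ is negative), so the needed positivity must come from a global cancellation that the truncation ruins; the ``routine'' small-$t$ step in fact contains the entire difficulty of the lemma. Second, in the intermediate regime you explicitly leave $0<p\le 1$ open, and it cannot be closed by a Brezis--Lieb argument as suggested: locally uniform convergence $g_n\to 0$ in $\D$ gives no control of boundary values (e.g.\ $g_n(z)=cz^n$ tends to $0$ locally uniformly while $|g_n|\equiv c$ on $\T$), so the a.e.\ convergence hypothesis is unavailable, and for $p<1$ the dual of $L^p$ is trivial, so there is no weak convergence or Radon--Riesz property to invoke. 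Only the subcase $1<p<2$ of your compactness step actually closes, and even there the argument is non-quantitative.

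For contrast, the paper's proof is a short, uniform, quantitative factorization argument: write $f=f_0f_1\cdots f_n$ with $n=[2/p]$, $f_0\in H^q$ and $f_k\in H^2$ all of unit norm; telescope $f-f(0)$ into $n+1$ terms, each containing exactly one factor $g_k=f_k-f_k(0)$; bound $\|g_k\|_2\le\sqrt{1-|f(0)|^2}$ by orthogonality in $H^2$; and finish with H\"older and the quasi-triangle inequality. This sidesteps boundary integrability, compactness, and the case analysis all at once. If you want to salvage a direct approach, you need at minimum a substitute for the pointwise lower bound valid for $p<1$ and a quantitative replacement for the compactness step; the factorization device is the standard way to reduce $H^p$, $p<2$, to the Hilbert-space case where orthogonality does the work.
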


In \cite[Lemma 2.2]{MR4024535} this is proved for $p \le 1$ and in \cite{MR3815251} this is proved for $1 < p \le 2$ (in \cite{MR3815251} this lemma is  proved even for $f\in L^p$, but with $\gamma_p\to \infty$ as $p\to 1$). Nevertheless we present here a simple uniform proof of this lemma.
\begin{proof}
Without loss of generality we may assume that $||f||_p = 1$. Let $n = \left [ \frac{2}{p} \right]$, $\frac{1}{q} + \frac{n}{2} = \frac{1}{p}$. We can decompose the function $f$ as a product  $f = f_0 f_1 \ldots f_n, f_0 \in H^q(\T), f_1, \ldots , f_n\in H^2(\T)$ such that $||f_0||_q =1,  {||f_k||_2 = 1}, k = 1, \ldots , n$.

Let $f_k(z) = a_k + g_k(z),\ g_k(0) = 0$. Note that $|a_k| \le 1$, $\prod\limits_{k = 0}^n |a_k| = |f(0)|$. Therefore $|a_k| \ge |f(0)|$. By orthogonality we have $||g_k||_2 \le \sqrt{1 - |f(0)|^2}$ and this inequality is valid even for $k = 0$ since $||f_0||_2 \le ||f_0||_q$. 

We have the following formula for $f - f(0)$:
\begin{equation}
f - f(0) = g_n \left(\prod_{k = 0}^{n - 1}f_k\right)  + g_{n-1}a_n \left(\prod_{k = 0}^{n - 2}f_k\right) + \ldots + g_1\left(\prod_{k = 2}^n a_k\right) f_0 + g_0 \left(\prod_{k = 1}^n a_k\right).
\end{equation}
For each of the first $n$ summands, by the obvious estimate $|a_k| \le 1$ and H{\"o}lder's inequality, we have $H^p$-norm is  bounded by $\sqrt{1 - |f(0)|^2}$. For the last summand we have $\prod_{k = 1}^n |a_k| \le 1$ and $||g_0||_p \le ||g_0||_2 \le \sqrt{1 - |f(0)|^2}$. Therefore by the triangle inequality (with the possible additional constant coming from the fact that $H^p(\T)$ for $p < 1$ is not a Banach space) we get $||f-f(0)||_p \le \gamma_p \sqrt{1 - |f(0)|^2}$.
\end{proof}

\section{Proof of Theorem \ref{my truncation} for functions without zeroes}
In this section we will prove the following theorem.

\begin{theorem}\label{no zeroes}
Let $0 < p < 2$ and $f\in H^p(\T)$ has no zeroes in $\D$. Then the conclusion of Theorem \ref{my truncation} holds for this function $f$.
\end{theorem}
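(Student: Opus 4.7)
Since $f$ has no zeros in $\D$, write $f = g^{2/p}$ with $g \in H^2(\T)$; after rescaling we may assume $\|g\|_2 = 1$, so $\|f\|_p = 1$. Set $\alpha = 2/p$ and decompose $g(z) = b_0 + b_1 z + G(z)$ with $G(0) = G'(0) = 0$. With $u = |b_0|^2$, $v = |b_1|^2$, $w = \|G\|_2^2$, one has $u + v + w = 1$, $a_0 = b_0^\alpha$, $a_1 = \alpha b_0^{\alpha - 1} b_1$, and hence $|a_0|^2 = u^\alpha$, $\tfrac{p}{2}|a_1|^2 = \alpha u^{\alpha - 1}v$. Theorem \ref{no zeroes} thus reduces to
$$\|g^\alpha - a_0 - a_1 z\|_p^2 \le C_p'\bigl[1 - u^\alpha - \alpha u^{\alpha - 1} v\bigr].$$

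I would then write $\alpha = n + \beta$ with $n = [\alpha]$ and $\beta \in [0, 1)$, and factor $g^\alpha = g^n \cdot g^\beta$; the factor $g^\beta$ is well-defined analytically on $\D$ thanks to the absence of zeros, lies in $H^{2/\beta}$ with $\|g^\beta\|_{2/\beta} = 1$, and is trivial when $\beta = 0$. Split each copy of $g$ as $(b_0 + b_1 z) + G$, and $g^\beta$ as $(b_0^\beta + \beta b_0^{\beta-1}b_1 z) + \tilde H$, where $\tilde H$ vanishes to order $2$ at the origin. Expanding $g^n \cdot g^\beta$ and collecting the summands whose contributions to the constant and linear Taylor terms reconstruct $a_0 + a_1 z$, the remainder $f - a_0 - a_1 z$ becomes a finite sum of product terms, in each of which either (i) at least one factor is $G$ or $\tilde H$, or (ii) at least two distinct factors contribute their linear pieces $b_1 z$ or $\beta b_0^{\beta-1} b_1 z$. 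Bound each such summand in $L^p$ by H\"older's inequality adapted to the partition $\tfrac{1}{p} = \tfrac{n}{2} + \tfrac{\beta}{2}$: use $\|b_0 + b_1 z\|_\infty \le \sqrt u + \sqrt v$, the identity $\|G^k\|_p = \|G\|_{pk}^k \le \|G\|_2^k = w^{k/2}$ (valid since $pk \le p\alpha = 2$ throughout), and a separate control of $\|\tilde H\|_{2/\beta}$ obtained by applying Lemma \ref{weaker} to $g^\beta$ and to the shift $(g^\beta - g^\beta(0))/z$.

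In the integer case $\beta = 0$ (i.e.\ $p = 2/n$), the binomial identity collapses the sum to the closed-form bound $\|g^\alpha - a_0 - a_1 z\|_p \le (\sqrt u + \sqrt v + \sqrt w)^\alpha - u^{\alpha/2} - \alpha u^{(\alpha - 1)/2}v^{1/2}$, and the theorem reduces to verifying
$$\bigl[(\sqrt u + \sqrt v + \sqrt w)^\alpha - u^{\alpha/2} - \alpha u^{(\alpha-1)/2}v^{1/2}\bigr]^2 \le C_p' \bigl[(u+v+w)^\alpha - u^\alpha - \alpha u^{\alpha-1}v\bigr]$$
on the simplex $\{u + v + w = 1,\; u, v, w \ge 0\}$. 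This algebraic comparison is the main obstacle; I would verify it by linearization near $u = 1$, where both sides behave as $\alpha w + \binom{\alpha}{2}(v+w)^2$ to leading order (confirming that the Hardy--Littlewood constant $p/2$ is genuinely the sharp one forced by the $|a_1|^2$ coefficient), by trivial checks at $u \to 0$ or $w \to 1$ (where the ratio tends to $1$), and by a uniform estimate in the intermediate regime. The resulting constant $C_p'$ is of order $(2/p)^2$, finite on $(\eps, 2)$ and blowing up only as $p \to 0$. The non-integer case $\beta \in (0, 1)$ yields an analogous but messier estimate with additional terms involving $\|\tilde H\|_{2/\beta}$, handled by the same algebraic template combined with the bound on $\tilde H$ coming from Lemma \ref{weaker}.
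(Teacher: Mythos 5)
Your factorization route ($f=g^{\alpha}$ with $g\in H^2$, multinomial expansion, reduction to an algebraic inequality on the simplex $u+v+w=1$) is genuinely different from the paper's, which instead runs a case analysis on $(|a_0|,|a_1|)$ and, in the only hard regime $|a_0|\to 1$, first multiplies $U=f^{p/2}$ by $1-\frac{p}{2a_0}a_1z$ to kill the linear Taylor coefficient before invoking Lemma \ref{weaker}. In the integer case $\beta=0$ your reduction looks workable (modulo the quasi-triangle constant for $p<1$, and modulo actually carrying out the simplex verification), but note that $p=2/k$ is exactly where the stronger conjectured inequality is already known; the substance of the theorem lives in $\beta\in(0,1)$, and that is where the proposal has a genuine gap, namely the control of $\|\tilde H\|_{2/\beta}$.

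The critical regime is $u\to1$, where the right-hand side degenerates: $1-u^{\alpha}-\alpha u^{\alpha-1}v=\alpha w+\binom{\alpha}{2}(v^{2}-w^{2})+O(\cdot^{3})\asymp w+v^{2}$. The summand $(b_0+b_1z)^{n}\tilde H$ lies entirely inside $f-a_0-a_1z$ (since $\tilde H=O(z^{2})$) and your method bounds each summand separately, so you need $\|\tilde H\|_{2/\beta}^{2}\lesssim w+v^{2}$. But applying Lemma \ref{weaker} to $g^{\beta}$ and then to $(g^{\beta}-b_0^{\beta})/z$ only gives $\|\tilde H\|_{2/\beta}^{2}\le\gamma^{2}\bigl(\gamma^{2}(1-u^{\beta})-\beta^{2}u^{\beta-1}v\bigr)$, which near $u=1$ is of order $v+w$: take $w=0$ and $v$ small and you get $O(v)$ where $O(v^{2})$ is required, and even with $\gamma=1$ the bound is $\beta(1-\beta)v+\beta w$, still a full power of $v$ too large. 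The estimate you actually need --- that removing the first two Taylor coefficients of $g^{\beta}$ with the \emph{sharp} constant in front of $|b_1|^{2}$ leaves a remainder quadratically small in the deficit --- is precisely the content of Theorem \ref{my truncation} at the exponent $2/\beta$ (which moreover exceeds $2$, outside the range where Lemma \ref{weaker} is stated or proved), so the fractional factor begs the question at exactly the point where the paper has to introduce its auxiliary function $V$. Two smaller inaccuracies: near $u=1$ the left-hand side of your simplex inequality behaves like $\alpha^{2}w$ plus cross terms of size $\sqrt{w}\,v$, not $\alpha w+\binom{\alpha}{2}(v+w)^{2}$ (the comparison still closes by AM--GM in the integer case, but the stated ``confirmation of sharpness'' is not what the expansion shows); and $C_p'$ cannot stay bounded as $p\to0$ merely like $(2/p)^{2}$ without further argument, since the triangle-inequality constants you suppress also blow up there.
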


For the proof of this theorem we will need the following result from \cite{MR4024535}.

\begin{theorem}
For $f\in H^p(\T)$ with $||f||_p = 1$ we have

\begin{equation}
|f'(0)| \le \kappa(p) = \begin{cases}1, &p \ge 1,\\
\sqrt{\frac{2}{p}}(1 - \frac{p}{2})^{1/p - 1/2},& 0 < p < 1.
\end{cases}
\end{equation}
\end{theorem}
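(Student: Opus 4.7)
The bound is trivial when $p \ge 1$: normalized arc-length on $\T$ is a probability measure, so $|a_1| = |\hat f(1)| \le \|f\|_1 \le \|f\|_p = 1 = \kappa(p)$. The interesting regime is $0 < p < 1$, and my plan is to reduce the extremal problem to a two-variable calculus optimization via the inner--outer factorization.

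Write $f = I \cdot F$, with $I$ inner and $F$ outer, so $\|F\|_p = \|f\|_p = 1$ and $F$ is zero-free in $\D$. Since $F$ is outer and nonvanishing, a single-valued holomorphic root $h := F^{p/2}$ exists with $h \in H^2$ and $\|h\|_2^2 = \int_\T |F|^p = 1$, and $F = h^{2/p}$. Differentiating $f = I \cdot h^{2/p}$ at the origin yields
\begin{equation*}
f'(0) = I'(0)\, h(0)^{2/p} + I(0) \cdot \tfrac{2}{p}\, h(0)^{2/p - 1}\, h'(0).
\end{equation*}
Combining the triangle inequality with the Schwarz--Pick estimate $|I'(0)| \le 1 - |I(0)|^2$ and Parseval's identity $|h(0)|^2 + |h'(0)|^2 \le 1$, and setting $s := |I(0)|$ and $a := |h(0)|$, reduces the claim to showing
\begin{equation*}
G(s, a) := (1 - s^2)\, a^{2/p} + s \cdot \tfrac{2}{p}\, a^{2/p - 1}\, \sqrt{1 - a^2} \le \kappa(p) \text{ for all } s, a \in [0,1].
\end{equation*}

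For fixed $a \in (0, 1]$, $G(\cdot, a)$ is a downward parabola in $s$ with critical point $s^\ast = \sqrt{1-a^2}/(pa)$. If $a^2 \le 1/(1+p^2)$ then $s^\ast \ge 1$, so the maximum over $s$ is at $s = 1$, reducing matters to maximizing $\frac{2}{p}\, a^{2/p-1}\sqrt{1-a^2}$; its critical point $a^2 = 1 - p/2$ lies in this subregion (the inequality $(1-p/2)(1+p^2) \le 1$ rearranges to $(p-1)^2 \ge 0$), and plugging in gives precisely $\kappa(p)$. If $a^2 > 1/(1+p^2)$, plugging in $s = s^\ast$ yields $a^{2/p-2}[1 - a^2(1-p^2)]/p^2$, which a direct computation shows is strictly decreasing in $a^2$ on $(1/(1+p^2), 1]$ when $p < 1$; its supremum is therefore attained on the common boundary $a^2 = 1/(1+p^2)$ at the value $2(1+p^2)^{-1/p}$, which is bounded by $\kappa(p)$ because the first-subcase function is also decreasing on $[1 - p/2, 1/(1+p^2)]$. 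Equality is attained by $I \equiv 1$ and $h(z) = \sqrt{1-p/2} + \sqrt{p/2}\, z$.

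The only delicate step is the constrained optimization of the preceding paragraph --- specifically, verifying that the interior critical point sits in the correct subregion and that the complementary subregion offers no improvement over the shared boundary value. Every other ingredient is standard Hardy-space machinery: the inner--outer factorization, the Schwarz--Pick lemma for the inner factor, and Parseval's identity for the $H^2$-factor.
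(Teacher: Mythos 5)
The paper does not prove this statement at all: it is quoted verbatim from the reference \cite{MR4024535} ("For the proof of this theorem we will need the following result from..."), so there is no internal proof to compare against. Your argument is a correct, self-contained proof. The structural reduction is sound: the inner--outer factorization gives $f = I\,h^{2/p}$ with $h = F^{p/2} \in H^2$, $\|h\|_2 = 1$, and $h(0) \neq 0$ since outer functions are zero-free; Schwarz--Pick applies to the inner factor and Parseval to $h$; and relaxing $s = |I(0)|$, $a = |h(0)|$ to independent variables in $[0,1]^2$ only weakens the bound, so the reduction to $\max G \le \kappa(p)$ is legitimate. I checked the calculus: the vertex $s^\ast = \sqrt{1-a^2}/(pa)$ is correct; the interior critical point $a^2 = 1-p/2$ of $G(1,\cdot)$ does lie in the subregion $a^2 \le 1/(1+p^2)$ precisely because $(1-p/2)(1+p^2) \le 1 \Leftrightarrow (p-1)^2 \ge 0$, and it produces exactly $\kappa(p)$; and in the complementary region the function $t \mapsto t^{1/p-1}\bigl(1 - t(1-p^2)\bigr)$ has its unique critical point at $t = 1/(1+p)\le 1/(1+p^2)$, hence is decreasing on $(1/(1+p^2),1]$, so its supremum there is the shared boundary value $2(1+p^2)^{-1/p} \le \kappa(p)$. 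The extremal function $f = \bigl(\sqrt{1-p/2} + \sqrt{p/2}\,z\bigr)^{2/p}$ (zero-free in $\D$ exactly when $p<1$) confirms sharpness. The $p \ge 1$ case via $|a_1| \le \|f\|_1 \le \|f\|_p$ is of course fine. This is very much in the spirit of the cited source's argument (factor out the inner part, pass to an $H^2$ root of the outer part, optimize), so while I cannot call it a genuinely different route, it is a complete proof where the paper offers only a citation.
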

Note that for all $0 < p < 2$ we have $\frac{p}{2}\kappa(p)^2 < 1$.

\begin{proof}[Proof of Theorem \ref{no zeroes}]
Without loss of generality we may assume that $||f||_p = 1$, $f(z) = a_0 + a_1z + \tilde f$. Note that $||\tilde f||_p \le A_p$ for some absolute constant $A_p < \infty$.

We fix $\delta_p > 0$ to be determined later and consider the following cases depending on the values of $|a_0|$ and $|a_1|$:

\begin{enumerate}
\item $|a_0| < \delta_p$,

\item $\delta_p \le |a_0| \le 1 - \delta_p, |a_1| < \delta_p$,

\item $\delta_p \le |a_0| \le 1 - \delta_p, |a_1| \ge \delta_p$,

\item $1 - \delta_p < |a_0|$.
\end{enumerate}

In the first three cases we will prove that $||f||_p^2 - |a_0|^2 - \frac{p}{2}|a_1|^2$ is greater than some absolute constant $\lambda_p > 0$ from which, by the inequality $||\tilde f||_p \le A_p$, the desired result follows.

In the first case we have $||f||_p^2 - |a_0|^2 - \frac{p}{2}|a_1|^2 \ge 1 - \delta_p^2 - \frac{p}{2}\kappa(p)^2$ which is positive if $\delta_p$ is small enough.

In the second case we have $||f||_p^2 -|a_0|^2 - \frac{p}{2}|a_1|^2 \ge 1 - (1-\delta_p)^2 - \delta_p^2 = 2(\delta_p - \delta_p^2) > 0$.

For the third case we will essentially repeat the proof of Lemma 1 from \cite{MR3304613}. We have $U(z) = f^{p/2}(z) = a_0^{p/2} + \frac{p}{2}a_0^{p/2-1}a_1z + \ldots$ with $||U||_2 = 1$. Therefore 
\begin{equation}
|a_0|^p + \frac{p^2}{4}|a_0|^{p-2}|a_1|^2\le 1.
\end{equation}

On the other hand we have 
\begin{equation}
\left(|a_0|^p + \frac{p^2}{4}|a_0|^{p-2}|a_1|^2\right)^{2/p} = |a_0|^2\left(1 + \left(\frac{p|a_1|}{2|a_0|}\right)^2\right)^{2/p} > |a_0|^2 \left(1 + \frac{p|a_1|^2}{2|a_0|^2}\right),
\end{equation}
where the last inequality is a Bernoulli's inequality $(1+t)^r > 1 + tr$ for $r > 1, t > 0$. Since we are on a compact set  $\delta_p \le |a_0| \le 1 - \delta_p$, $\delta_p \le |a_1| \le \kappa(p)$ and the functions are continuous we actually have a nonzero loss in the Bernoulli's inequality

\begin{equation}
|a_0|^2\left(1 + \left(\frac{p|a_1|}{2|a_0|}\right)^2\right)^{2/p} \ge |a_0|^2 \left(1 + \frac{p|a_1|^2}{2|a_0|^2}\right) + \lambda_p
\end{equation}
for some $\lambda_p > 0$. Therefore $1 \ge |a_0|^2 \left(1 + \frac{p|a_1|^2}{2|a_0|^2}\right) + \lambda_p = |a_0|^2 + \frac{p}{2}|a_1|^2 + \lambda_p$ as desired.

Now we turn to the fourth case which requires some additional ideas. Put $U(z) = f^{p/2}(z) = a_0^{p/2} + \frac{p}{2}a_0^{p/2 - 1}a_1 z + \tilde U(z) \in H^2(\T)$, $||U||_2 = 1$ (here we used that $f$ has no zeroes).

Denote $|a_0|^2 = 1 -\beta^2$, $||\tilde U||_2 = \eps$. Our goal now is to prove that $||\tilde f||_p \lesssim (\beta^2 + \eps)$.

Consider $V(z) = U(z)(1 - \frac{p}{2a_0}a_1z)$. We have
\begin{equation}
V(z) = a_0^{p/2} - \frac{p^2a_0^{p/2 - 2}}{4}a_1^2z^2 + \tilde U - \frac{p}{2a_0}a_1\tilde U z = a_0^{p/2} + \tilde V.
\end{equation}

Note also that by orthogonality it is easy to see from $||U||_2 = 1$ that $|a_1|, \eps \lesssim \beta$. Therefore we can bound $||\tilde V||_2 \lesssim \eps + \beta^2$. Thus, by Pythagoras's Theorem we have
\begin{equation}
||V||_2 = \sqrt{|a_0|^p + ||\tilde V||_2^2} \le \sqrt{|a_0|^p + O(\eps^2 + \beta^4)} = |a_0|^{p/2} + O(\eps^2 + \beta^4).
\end{equation}

We will now apply Lemma \ref{weaker} to the function $V^{2/p}$ ($V$ has no zeroes for small enough $\frac{|a_1|}{|a_0|}$, that is for small enough $\delta_p$):

\begin{equation}
||V^{2/p} - a_0||_p \lesssim \sqrt{||V||_2^{4/p} - |a_0|^2} \le \sqrt{|a_0|^2 + O(\eps^2 + \beta^4) - |a_0|^2} = O(\beta^2 + \eps).
\end{equation}

Now we are going to connect $V^{2/p} - a_0$ and $\tilde f$:
\begin{gather*}
V^{2/p} - a_0 = U^{2/p}(1 - \frac{p}{2a_0}a_1z)^{2/p} - a_0 = (a_0 + a_1z + \tilde f)(1 - \frac{a_1}{a_0}z + O(\beta^2)) - a_0 =\\ O(\beta^2) + \tilde f + \tilde f(a_1z + O(\beta^2)) = \tilde f + O(\beta^2) + O(\beta)\tilde f.
\end{gather*}

Therefore $||\tilde f|| = O(\beta^2 + \eps)(1 + O(\beta))^{-1} = O(\beta^2 + \eps)$, as required.

Since $||U||_2 = 1$ we have
\begin{equation}\label{3.5}
|a_0|^p + \frac{p^2}{4}|a_0|^{p-2}|a_1|^2 + \eps^2 = 1.
\end{equation}

Recall that in the end we want to prove that
\begin{equation}
|a_0|^2 + \frac{p}{2}|a_1|^2 + \eps_p||\tilde f||_p^2 \le 1.
\end{equation}

By our bound for $||\tilde f||_p$ it is enough to prove that
\begin{equation}
|a_0|^2 + \frac{p}{2}|a_1|^2 + c_p(\beta^4 + \eps^2) \le 1
\end{equation}
holds for some $c_p > 0$. Substituting the value of $|a_1|^2$ from \eqref{3.5} we get
\begin{equation}
|a_0|^2 + \frac{2}{p}|a_0|^{2-p}(1 - \eps^2 - |a_0|^p) + c_p(\beta^4 + \eps^2)\le 1.
\end{equation}

Choosing $c_p \le \frac{2}{p}(1 - \delta_p)^{2-p}$ we can neglect terms with $\eps$ and we are left with the inequality
\begin{equation}
(1 - \beta^2) + \frac{2}{p}(1 - \beta^2)^{1 - p/2}(1 - (1-\beta^2)^{p/2}) + c_p\beta^4 \le 1.
\end{equation}

Expanding the left-hand side via Taylor's formula we get
\begin{equation}
1 +\frac{p-2}{4}\beta^4 + c_p\beta^4 + O(\beta^6),
\end{equation}
and it is smaller than $1$ for $c_p < \frac{2-p}{4}$ and small enough $\beta$ (that is small enough $\delta_p$) since the constant in front of $\beta^4$ is negative.
\end{proof}
\section{Proof of Theorem \ref{my truncation}}
In this section we will finish the proof of Theorem \ref{my truncation} by taking into consideration the potential zeroes of the function $f$.

Let $f\in H^p(\T), ||f||_p = 1$. Write it as $f = Bg, ||g||_p = 1$, $g$ has no zeroes, $B = \prod_{n = 1}^N \frac{z - w_n}{1 - z\bar w_n}$ (obviously, it is enough to consider finite Blaschke products). Let $g(z) = a_0 + a_1z + \tilde g(z)$, $B(z) = b_0 + b_1z + \tilde B(z)$. We know that $|a_0|^2 + \frac{p}{2}|a_1|^2 + \eps_p ||\tilde g||_p^2 \le 1$ and we want to prove the same bound for $f$ (with possibly smaller $\eps_p$). 

Note that if $|f(0)| < \delta_p$ then as in the proof of Theorem \ref{no zeroes} we can prove the desired inequality. Therefore we can assume that $|f(0)| \ge \delta_p$. Since $|f(0)| \le |w_n|$ for all $n$ we have that $|w_n| \ge \delta_p$.

Put $f_k(z) = g(z) \prod_{n = 1}^k \frac{z-w_n}{1-z\bar w_n}$. Note that $|f_k(0)|\ge |f_N(0)| = |f(0)| \ge \delta_p$.

Carefully reading the proof of Lemma 1 from \cite{MR3304613} we can see that each factor $\frac{z - w_k}{1 - z\bar w_k}$ decreases $|f(0)|^2 + \frac{p}{2}|f'(0)|^2$ by at least $c_p(1 - |w_k|)$  for some $c_p > 0$, that is 
\begin{equation}\label{blashke reduction}
{|f_{k-1}(0)|^2 + \frac{p}{2}|f_{k-1}'(0)|^2} \ge |f_k(0)|^2 + \frac{p}{2}|f_{k}'(0)|^2 + c_p(1-|w_k|)
\end{equation}
(note that $c_p \to 0$ as $p\to 2$).

We have
\begin{equation}
|b_0| = \prod_{n = 1}^N |w_n| = \exp(\summ_{n = 1}^N \log |w_n|) \ge \exp(-C_p\summ_{n = 1}^N (1 - |w_n|)),
\end{equation}
where $C_p < \infty$ since all $w_n$ are bounded away from $0$. By orthogonality we have
\begin{equation}
||\tilde B||_p \le ||\tilde B||_2 \le \sqrt{1 - |b_0|^2} \le \sqrt{1 - \exp(-C_p\summ_{n = 1}^N(1 - |w_n|))} \le  \sqrt{C_p\summ_{n = 1}^N (1 - |w_n|)}.
\end{equation}

Let us now write $f(z) - f(0) - f'(0)z$ in terms of $B$ and $g$:
\begin{equation}
f(z) - f(0) - f'(0)z = b_1a_1z^2 + B(z)\tilde  g(z) + \tilde B(z) (a_0 + a_1z).
\end{equation}

Since Blaschke products are unimodular we have $||B\tilde g||_p = ||\tilde g||_p$. Since $|a_0|\le 1, |a_1| \le \kappa(p)$, the last term has $H^p$-norm at most $\alpha_p||\tilde B||_p$ for some $\alpha_p < \infty$. Finally, for $b_1$ we have again by orthogonality 
\begin{equation}
|b_1| \le \sqrt{1 - |b_0|^2} \le \sqrt{C_p\summ_{n = 1}^N (1 - |w_n|)}.
\end{equation}

Collecting everything we get
\begin{equation}\label{4.5}
||f(z) - f(0) - f'(0)z||_p \le A_p\left(||\tilde g||_p + \sqrt{\summ_{n = 1}^N (1 - |w_n|)}\right).
\end{equation}

On the other hand by \eqref{blashke reduction}
\begin{equation}\label{4.6}
|f(0)|^2 + \frac{p}{2}|f'(0)|^2 \le |a_0|^2 + \frac{p}{2}|a_1|^2 - c_p \summ_{n = 1}^N (1 - |w_n|)
\end{equation}
and by Theorem \ref{no zeroes}
\begin{equation}\label{4.7}
|a_0|^2 + \frac{p}{2}|a_1|^2 + \eps_p ||\tilde g||_p^2 \le 1.
\end{equation}

Now it is easy to see from \eqref{4.5}, \eqref{4.6}, \eqref{4.7} and the trivial inequality $(x+y)^2 \le 2x^2 + 2y^2$ that for some $\eps_p' > 0$ we have
\begin{equation}
|f(0)|^2 + \frac{p}{2}|a_1|^2 + \eps_p' ||f(z) - f(0) - f'(0)z||_p^2 \le 1,
\end{equation}
as required.

\section{Proof of Theorem \ref{my hardy}}

In this section we will deduce Theorem \ref{my hardy} from Theorem \ref{my truncation}.

We can rewrite inequality \eqref{hardy} as

\begin{equation}
\frac{1}{C_p}\summ_{n = 0}^\infty \frac{|a_n|^2}{(n+1)^{2/p - 1}} \le ||f||_p^2.
\end{equation}

Applying this to the function $\tilde f(z) = f(z) - f(0) - f'(0)z$ we get

\begin{equation}
\frac{1}{C_p}\summ_{n = 2}^\infty \frac{|a_n|^2}{(n+1)^{2/p - 1}}\le ||\tilde f||_p^2.
\end{equation}

Combining it with the bound from Theorem \ref{my truncation} we get

\begin{equation}
|a_0|^2 + \frac{p}{2}|a_1|^2 + \frac{1}{C_pC_p'}\summ_{n = 2}^\infty  \frac{|a_n|^2}{(n+1)^{2/p - 1}} \le ||f||_p^2.
\end{equation}

\subsection*{Acknowledgments} I would like to thank my advisor Kristian Seip for introducing me to this problem and 	for his constant support and encouragement, as well as for carefully reading this note and spotting some inaccuracies in the first version of it. I also would like to thank Dmitriy Stolyarov and Pavel Zatitskiy for fruitful discussions. This work was supported by Grant 275113 of the Research Council of Norway and by «Native towns», a social investment program of PJSC «Gazprom Neft».

\bibliographystyle{ieeetr}
\bibliography{Hardy-Littlewood_type_inequality}

\end{document}